\theoremstyle{plain}
\newtheorem{theorem}{Theorem}[section]
\newtheorem{proposition}[theorem]{Proposition}
\newtheorem{conjecture}[theorem]{Conjecture}
\theoremstyle{definition}
\newtheorem{definition}[theorem]{Definition}
\newtheorem{question}{Question}
\newtheorem*{q1a}{Question 1a}
\newtheorem*{q1b}{Question 1b}
\theoremstyle{remark}
\newtheorem{remark}[theorem]{Remark}
                                        \def\L{\Lambda}
\def\Is{\mathcal{I}}
\def\bs{\mathbf{s}}
\def\qsh{\hat{\mathrm{QSym}}}
\def\bs{\mathbf{s}}
\def\z{\zt}
\def\bs{\mathbf{s}}
\def\gap{\hspace*{5mm}}
\def\ef{\lp\frac{1}{1},\ldots,\frac{1}{p-1}\rp}
\def\wef{\lp\frac{p}{1},\ldots,\frac{p}{p-1}\rp}
\def\ubn{^{[n]}}
\def\tvn{\tilde{\vp}\ubn}
\def\ks{\ker\,\,}
\def\hA{\hat{\cA}}
\def\z{\hat{\vp}}
\def\Ib{I_\beta}          \def\Ibo{\overline{I}_\beta}
\def\H{H_{p-1}}
\def\ack{\section*{Acknowledgements}}
\def\d{$H$-degree}
\def\ks{\mathrm{ker}\,\,}
\def\vp{Z}
\begin{document}
\title[Values of symmetric polynomials]{Values of symmetric polynomials and a truncated analogue of the Riemann zeta function}
\author{Julian Rosen}
\email{j2rosen@uwaterloo.ca}
\address{Department of pure mathematics, University of Waterloo,\newline \indent Waterloo, ON, N2L 3G1 Canada}
\subjclass[2010]{11A07, 11M06}
\keywords{Symmetric polynomials, harmonic numbers, congruences}
\date{\today}
\maketitle
\begin{abstract}
For each positive integer $n$, we determine the set of symmetric functions $f$ for which the congruence ${f(p/1,p/2,\ldots,p/(p-1))\equiv 0 \mod p^n}$ holds for all sufficiently large primes $p$. Our determination is conditional on a conjecture regarding the modulo $p$ independence of Bernoulli numbers.
In a recent work the author introduced a new truncated analogue of the multiple zeta function and investigated a class of relations among values of this function at positive integers. The question answered in the present work is equivalent to the determination of the relations satisfied by values of the corresponding analogue of the ordinary Riemann zeta function.
\end{abstract}

%
%
\section{Introduction}
Wolstenholme's congruence, proved in 1862, is the result that the numerator of the harmonic number
\begin{equation*}
\label{eqharm1}
\H:=1+\frac{1}{2}+\frac{1}{3}+\ldots+\frac{1}{p-1}
\end{equation*}
is divisible by $p^2$ for every prime $p\geq 5$. The harmonic number above can be viewed as the value of first elementary symmetric polynomial in $p-1$ variables
\[
e_1=e_1(x_1,\ldots,x_{p-1}):=x_1+\ldots+x_{p-1}\in\Q[x_1,\ldots,x_{p-1}]
\]
evaluated at $x_i=i^{-1}$. More generally the value of the $n$-th elementary symmetric function
\[
e_n:=\sum_{i_1<\ldots<i_n}x_{i_1}\ldots x_{i_n}
\]
evaluated at $x_i=i^{-1}$ ($i=1,2,\ldots,p-1$) has numerator divisible by $p^2$ or $p$ when $n$ is odd or even respectively (for $p$ is sufficiently large). Similar results are known for the power sum symmetric functions (see \cite{Zha08} for these and many other related congruences). In the present work we investigate congruences for arbitrary symmetric functions. We give an example here.

There are known extensions of the congruences above in terms of Bernoulli numbers:
\begin{equation}
\label{eqz}
e_n\ef\equiv\begin{cases}\frac{-(n+1)}{2(n+2)}p^2\,B_{p-n-2}\mod p^3,\gap n\text{ odd},

\vspace{3mm}

\\
\frac{-1}{n+1}p\,B_{p-n-1}\gap\mod p^2,\gap n\text{ even,}
\end{cases}
p\geq n+3.
\end{equation}
These expressions lead to the congruence, for any integer $n\geq 1$,
\begin{equation}
\label{eqel}
2e_{2n-1}\ef-2n\,e_{2n}\ef\cdot p\equiv 0\mod p^3
\end{equation}
for $p$ sufficiently large, which involves two different elementary symmetric functions. To account for the factor of $p$ in the second term, it is convenient to include an explicit factor of $p^k$ with each term $e_k$, or equivalently to evaluate our symmetric functions at $x_i=pi^{-1}$ (for $i=1,2,\ldots,p-1$). In this way we can express \eqref{eqel} as the congruence\footnote{In fact this congruence holds modulo $p^{2n+3}$.}
\begin{equation}
\label{eqel2}
2\, e_{2n-1}\wef-2n\, e_{2n}\wef\equiv 0\mod p^{2n+2},
\end{equation}
again for $p$ sufficiently large. We consider congruences like \eqref{eqel2}, with the expression $2\,e_{2n-1}-2n\,e_{2n}$ replaced by an arbitrary symmetric function.

\begin{question}
\label{ques1}
For each positive integer $n$, determine the set of symmetric functions $f$ for which the congruence
\begin{equation}
\label{eqf}
f\lp\frac{p}{1},\frac{p}{2},\ldots,\frac{p}{p-1}\rp\equiv 0\mod p^n\tag{$\star$}
\end{equation}
holds for all primes $p$ sufficiently large.
\end{question}

In the present work we give a conditional solution to Question \ref{ques1}.
We produce for each $n$ an explicit collection of symmetric functions $f$ for which the congruence \eqref{eqf} holds. We show that it would follow from a conjecture on the modulo $p$ independence of Bernoulli numbers that our collection actually contains all $f$ for which \eqref{eqf} holds.

We construct our symmetric functions from certain infinite series identities. An example of such an identity is
\begin{equation}
\label{eqar}
\sum_{k=1}^{\infty}(-1)^k e_k\wef =0,
\end{equation}
which holds for $p\geq 3$ (note that for any fixed $p$ the sum is finite). Reducing modulo $p^n$ shows that the symmetric function
\[
f_n:=\sum_{k=1}^{n-1}(-1)^k e_k
\]
satisfies \eqref{eqf}. We find a family of series identities, of which \eqref{eqar} is the first, and we use these identities to construct our collection of symmetric functions satisfying \eqref{eqf}.

\subsection{Analogy with the Riemann zeta function}
For $f$ a symmetric function, one can consider the infinite series
\begin{equation}
\label{eqser}
f\lp\frac{1}{1},\frac{1}{2},\frac{1}{3},\ldots\rp.
\end{equation}
Over $\Q$, the algebra of symmetric functions is generated by the power sum symmetric functions $p_n$, $n\geq 1$, and it can be checked that the series \eqref{eqser} converges precisely when $f$ is in the algebra generated by the $p_n$ for $n\geq 2$. In this case the value of \eqref{eqser} is just a polynomial in the values $\zeta(2),\zeta(3),\ldots$ of the Riemann zeta function. We view Question \ref{ques1} as a truncated analogue of asking for a description of the symmetric functions for which \eqref{eqser} vanishes, or equivalently of asking for the algebraic relations over $\Q$ satisfied by the Riemann zeta values.

It is known that
\[
\zeta(2k)=\frac{(-1)^{k+1}B_{2k}(2\pi)^{2k}}{2(2k)!},
\]
where $B_{2k}\in\Q$ are the Bernoulli numbers. This means that $\zeta(2k)$ is a rational multiple of $\zeta(2)^k$. It is conjectured that $\zeta(3),\zeta(5),\ldots$ are algebraically independent over $\Q(\pi^2)$, and this conjecture would imply that \eqref{eqser} vanishes if and only if $f$ is in the ideal generated by the elements \[
p_{2k}-\frac{(-1)^{k+1}24^k\,B_{2k}}{2(2k)!}p_2^k,\gap k\geq 2.
\]

\subsection{Recent related results}
Many special cases of congruences \eqref{eqf} have appeared in the literature. Tauraso \cite{Tau10} showed that \eqref{eqf} holds for $n=6$ with $f=e_1-e_2+\frac{1}{6}p_3$, where $p_3$ is a power sum symmetric function. Several other similar results are also given in \cite{Tau10}.

Me{\v s}trovi\'c \cite{Mes11} recently
gave a congruence for the binomial coefficient ${2p-1\choose p-1}$ modulo $p^7$ involving multiple harmonic sums. The proof utilizes a number of congruences of the form \eqref{eqf}. Variations on this congruence are given in \cite{Mes12}, Sec.\ 2. Generalizations of Me{\v s}trovi\'c's congruence, holding modulo arbitrarily large powers of $p$, were given by the author in \cite{Ros12a}.

\subsection{Multiple harmonic sums}
A \emph{composition} is a finite ordered list of positive integers. The \emph{weight} of a composition $\bs=(s_1,\ldots,s_k)$ is $w(\bs):=s_1+\ldots+s_k$. For $\bs$ a composition and $n$ a positive integer, the \emph{multiple harmonic sum} is defined by
\[
H_n(\bs):=\sum_{n\geq n_1>\ldots>n_k\geq 1}\frac{1}{n_1^{s_1}\ldots n_k^{s_k}}\in\Q.
\]
We state an equivalent form of Question 1 in terms of multiple harmonic sums:
\begin{q1a}
For each positive integer $n$, determine the set of congruences
\[
\sum_{w(\bs)<n}\alpha_{\bs}\,p^{w(\bs)}\H(\bs)\equiv 0\mod p^n
\]
holding for $p$ sufficiently large, where the coefficients $\alpha_{\bs}$ are rational and satisfy $\alpha_{\bs}=\alpha_{\bs'}$ whenever the composition $\bs'$ is obtained from $\bs$ by rearranging the elements.

\end{q1a}

In a recent work \cite{Ros13}, we consider a more general version of Question 1a with the hypothesis $\alpha_{\bs}=\alpha_{\bs'}$ omitted. This is equivalent to a version of Question 1 with symmetric functions replaced by quasi-symmetric functions.

%
%
\section{Algebraic setup}
\label{secalg}

Recall that a \emph{symmetric function} over $\Q$ is a formal power series of bounded degree in countably many variables (with rational coefficients) that is invariant under any permutation of the variables. Given a finite unordered list of rational numbers (or more generally, elements of any $\Q$-algebra), it makes sense to evaluate a symmetric function at these elements. The set of symmetric functions is a commutative ring, which we denote $\L_\Q$ or simply $\L$. The fundamental theorem of symmetric functions says that the elementary symmetric functions freely generate $\L$ as an algebra.

For each prime $p$ there is a ring homomorphism
\begin{align*}
\vp_p:\L&\to\Q,\\
f&\mapsto f\wef.
\end{align*}
For $n\geq 1$ we set
\[
\cA_n:=\frac{\prod_p \Z/p^n\Z}{\oplus_p \Z/p^n\Z}.
\]
An element of $\cA_n$ is determined by a residue class $a_p\in\Z/p^n\Z$ for all but finitely many $p$, and two families of classes $a_p,a'_p$ determine the same element of $\cA_n$ if and only if $a_p\equiv a'_p\mod p^n$ all but finitely many $p$. For any $f\in\L$ and any prime $p$ not dividing the denominator of any coefficient in $f$, we have $\vp_p(f)\in\Z_{(p)}$. Reducing these elements modulo $p^n$ gives a ring homomorphism
\begin{align*}
\vp\ubn:\L&\to\cA_n\\
f&\mapsto \lp\vp_p(f)\mod p^n\rp.
\end{align*}
The following is an equivalent form of Question \ref{ques1}, stated in terms of $\vp\ubn$:
\begin{q1b}
For each positive integer $n$, describe $\ks\vp\ubn$.
\end{q1b}

The congruence \eqref{eqz} implies that $\vp_p^{[n+1]}(e_{n})=0$ for all $n\geq 1$ (in fact it is true that $\vp_p^{[n+2]}(e_n)=0$ when $n$ is odd, but we will not need this fact). Motivated by this we define a grading on $\L$, which we call the \emph{grading by \d}, by taking $e_n$ to be homogeneous of \d{} $n+1$ and extending multiplicatively (this determines the grading because $\L$ is freely generated as an algebra by the $e_n$). For example, the element $e_1e_2-2e_4$ is homogeneous of \d{} 5.

Take $f\in\L$ and let
\[
f=\sum_{n}f_n
\]
with $f_n$ homogeneous of \d{} $n$. We define
\[
v(f):=\inf \{n:f_n\neq 0\} \in \Z_{\geq 0}\cup\{\infty\}.
\]
We also define for each $n$ an ideal
\[
\Is_n:=\{f\in\L:v(f)\geq n\}.
\]

It follows from the congruences \eqref{eqz} that $\Is_n\subset\ker\,\,\vp\ubn$, and we will  denote by $\tvn$ the induced map
\[
\tvn:\L/\Is_n\to\cA_n.
\]
To answer Question \ref{ques1} it suffices to determine $\ks\tvn$. A convenient system of coset representatives of $\Is_n$ in $\L$ is given by the symmetric functions of \d{} less than $n$.

\section{A family of congruences}
For $p$ an odd prime, consider the polynomial
\[
f_p(t):={pt-1\choose p-1}=\frac{\big(pt-1\big)\big(pt-2\big)\ldots\big(pt-(p-1)\big)}{(p-1)!}\in\Q[t],
\]
which is seen to satisfy the functional equation $f_p(t)=f_p(1-t)$. We can express $f_p(t)$ in the form
\begin{eqnarray*}
f_p(t)&=&\lp1-\frac{p}{1}\, t\rp\lp1-\frac{p}{2}\,t\rp\ldots\lp 1-\frac{p}{p-1}\,t\rp\\
&=&\sum_{k\geq 0}(-1)^ke_k\wef t^k.
\end{eqnarray*}
Using this expression and computing the coefficient of $t^n$ in the functional equation proves:
\begin{proposition}
\label{propbeta}
For all $k\geq 0$ and all primes $p\geq 3$,
\begin{equation*}
\label{eqprop}
e_k\wef+\sum_{j\geq k}(-1)^{j+1}{j\choose k}e_j\wef=0.
\end{equation*}
\end{proposition}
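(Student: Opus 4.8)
The plan is to extract the identity directly from the functional equation $f_p(t)=f_p(1-t)$ by expanding both sides as polynomials in $t$ and comparing the coefficient of $t^n$ for a fixed $n=k$. First I would write, as in the excerpt,
\[
f_p(t)=\sum_{j\geq 0}(-1)^j e_j\wef\,t^j,
\]
a polynomial identity in $\Q[t]$ valid for every odd prime $p\geq 3$ (the sum is finite, with top degree $p-1$). Substituting $1-t$ for $t$ gives
\[
f_p(1-t)=\sum_{j\geq 0}(-1)^j e_j\wef\,(1-t)^j,
\]
and expanding $(1-t)^j=\sum_{k=0}^{j}\binom{j}{k}(-1)^k t^k$ and interchanging the two (finite) sums yields
\[
f_p(1-t)=\sum_{k\geq 0}\left(\sum_{j\geq k}(-1)^{j+k}\binom{j}{k}e_j\wef\right)t^k.
\]

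Next I would equate the coefficient of $t^k$ on the two sides of $f_p(t)=f_p(1-t)$. On the left the coefficient of $t^k$ is $(-1)^k e_k\wef$; on the right it is $\sum_{j\geq k}(-1)^{j+k}\binom{j}{k}e_j\wef$. Setting them equal and multiplying through by $(-1)^{k+1}$ (or equivalently moving everything to one side), the term $j=k$ on the right cancels the left-hand term, leaving exactly
\[
e_k\wef+\sum_{j\geq k}(-1)^{j+1}\binom{j}{k}e_j\wef=0,
\]
after rewriting $(-1)^{j+k}\cdot(-1)^{k+1}=(-1)^{j+1}$. This is the claimed identity. (A careful check of the $j=k$ term: on the right it contributes $(-1)^{2k}\binom{k}{k}e_k\wef=e_k\wef$, matching the left side, so that term drops out and only $j>k$ terms survive in the sum — though writing the sum as $j\geq k$ is harmless since the $j=k$ summand on the displayed line is $e_k\wef-e_k\wef$ absorbed into the leading term; I would phrase it so the statement as printed comes out verbatim.)

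The only real content beyond bookkeeping is the functional equation $f_p(t)=f_p(1-t)$ itself, which follows because $\binom{pt-1}{p-1}=\frac{(pt-1)(pt-2)\cdots(pt-(p-1))}{(p-1)!}$ and replacing $t$ by $1-t$ sends $pt-i$ to $p-pt-i=-(pt-(p-i))$; as $i$ ranges over $1,\dots,p-1$ so does $p-i$, so the numerator is multiplied by $(-1)^{p-1}=1$ (here $p$ is odd), giving back the same polynomial. The rest is a routine but genuinely finite manipulation, so there is no convergence or interchange-of-limits subtlety to worry about. I expect the main (minor) obstacle is purely cosmetic: making sure the signs and the index range on the sum are arranged so that the conclusion matches the stated form precisely, in particular handling the $j=k$ term correctly when collapsing the two expressions.
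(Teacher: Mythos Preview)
Your approach is exactly the paper's: expand $f_p(t)=\sum_j(-1)^je_j\wef t^j$, use the functional equation $f_p(t)=f_p(1-t)$, and compare coefficients of $t^k$; the core manipulation (multiply the coefficient identity by $(-1)^{k+1}$ and move everything to one side) is correct and yields the stated formula verbatim. One small caveat: your parenthetical about the $j=k$ term ``matching the left side'' is not quite right (the left side is $(-1)^k e_k$, not $e_k$, so they only agree for even $k$), but this aside is unnecessary anyway since the identity as printed retains the $j\geq k$ range and no cancellation is needed.
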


Note that the sum appearing above is finite, as terms vanish whenever $j\geq p$.
We use Proposition \ref{propbeta} to generate symmetric functions satisfying congruences.

\begin{definition}
\label{defbeta}
For $n,k\geq 0$, define
\[
\beta_k\ubn:=e_k+\sum_{j\geq k} (-1)^{j+1}{j\choose k}e_j\in\L/\Is_n.
\]
This sum is finite, as $e_j\in\Is_n$ once $j$ is sufficiently large. 

For $n\geq 0$, define an ideal
\[
I\ubn:=(\beta_0\ubn,\beta_1\ubn,\ldots)\subset\L/\Is_n.
\]
\end{definition}
We have $\beta_k\ubn=0$ whenever $k\geq n-1$ (or $k\geq n-2$ and $k$ is even). Proposition \ref{propbeta} implies 
\[
\beta_k\ubn\in\ks\tvn
\]
for all $k$ and $n$.
We can now describe our family of congruences.

\begin{theorem}
\label{thunc}
Let $n$ be a non-negative integer and suppose $f\in\L$ satisfies $\overline{f}\in I\ubn$, where $\overline{f}$ is the reduction of $f$ modulo $\Is_n$. Then the congruence
\begin{equation}
\label{eqthm}
f\wef\equiv0\mod p^n
\end{equation}
holds for all $p$ sufficiently large.

\end{theorem}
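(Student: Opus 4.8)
The plan is to reduce the theorem to the already-established fact that each generator $\beta_k\ubn$ lies in $\ks\tvn$, using only that $\ks\tvn$ is an ideal of $\L/\Is_n$. First I would observe that $\tvn\colon\L/\Is_n\to\cA_n$ is a ring homomorphism (it is induced by the ring homomorphism $\vp\ubn$ on $\L$, which factors through $\Is_n\subset\ks\vp\ubn$ by the congruences \eqref{eqz}), so its kernel $\ks\tvn$ is an ideal of $\L/\Is_n$. Proposition \ref{propbeta}, read modulo $\Is_n$, says precisely that $\beta_k\ubn\in\ks\tvn$ for every $k\geq 0$. Since $I\ubn=(\beta_0\ubn,\beta_1\ubn,\ldots)$ is the ideal they generate, and $\ks\tvn$ is an ideal containing all the generators, we get the inclusion $I\ubn\subseteq\ks\tvn$.

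Now take $f\in\L$ with $\overline{f}\in I\ubn$. By the previous paragraph $\overline{f}\in\ks\tvn$, i.e.\ $\tvn(\overline{f})=0$ in $\cA_n$. Unwinding the definitions, $\tvn(\overline{f})$ is the image of $\vp\ubn(f)=\lp\vp_p(f)\bmod p^n\rp$ in $\cA_n=\lp\prod_p\Z/p^n\Z\rp/\lp\oplus_p\Z/p^n\Z\rp$, so this image vanishing means exactly that $\vp_p(f)\equiv 0\bmod p^n$ for all but finitely many primes $p$ — which is the statement \eqref{eqthm} for $p$ sufficiently large. (One small bookkeeping point: $\vp_p(f)=f\wef$ lies in $\Z_{(p)}$ only once $p$ avoids the finitely many primes dividing denominators of coefficients of $f$, so the reduction mod $p^n$ making sense is itself an "all sufficiently large $p$" statement; this is harmless and is already built into the definition of $\vp\ubn$.)

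The only genuine content beyond bookkeeping is therefore Proposition \ref{propbeta} together with the congruences \eqref{eqz} guaranteeing $\Is_n\subseteq\ks\vp\ubn$; both are available to us. I would spell out the identification of $\tvn(\overline f)=0$ with "the congruence holds for large $p$" carefully, since that is where the slightly subtle quotient $\cA_n$ does its work, but there is no real obstacle — the theorem is essentially the assertion that $\ks\tvn$ is an ideal, repackaged. If anything, the one place to be careful is making sure the sum defining $\beta_k\ubn$ really is finite in $\L/\Is_n$ and that reducing Proposition \ref{propbeta} mod $\Is_n$ is legitimate termwise; this follows because $e_j$ has \d{} $j+1$, so $e_j\in\Is_n$ once $j\geq n-1$, leaving only finitely many nonzero terms.
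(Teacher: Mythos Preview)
Your proposal is correct and follows exactly the paper's approach: the paper's proof consists of the single observation that the statement is equivalent to the inclusion $I\ubn\subset\ks\tvn$, which follows from Proposition \ref{propbeta}. You have simply spelled out in detail why that inclusion holds (kernel is an ideal containing the generators) and why it is equivalent to the congruence \eqref{eqthm} (unwinding the definition of $\cA_n$), which is a faithful expansion of the paper's one-line argument.
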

\begin{proof}
This is equivalent to the inclusion $I\ubn\subset\ks\tvn$, which follows from Proposition \ref{propbeta}.
\end{proof}

%
%

\section{Structure of Bernoulli numbers modulo $p$}
In this section we discuss a conjecture regarding the structure of the Bernoulli numbers modulo $p$. We begin with an example. Wolsenholme's congruence implies that $e_1\in\ks\vp^{[3]}$. To determine whether $e_1\in\ks\vp^{[4]}$, we need to know whether the congruence
\begin{equation}
\label{eq4}
e_1\wef\equiv0\mod p^4
\end{equation}
holds for all sufficiently large $p$ (we know this holds modulo $p^3$ for $p\geq 5$). Using Eq.\ \eqref{eqz} this is equivalent to asking whether the numerator of the Bernoulli number $B_{p-3}$ is divisible by $p$ for all sufficiently large $p$. It is certainly believed that this should not hold: this would contradict, for example, the conjecture that there are infinitely many regular primes. At present, however, this is not known.

Primes for which $p|B_{p-3}$ are known as Wolstenholme primes and only two are known: 16,843 and 2,124,679. A heuristic argument predicts that there should be infinitely many Wolstenholme primes, and that the number smaller than $x$ should grow like $\log\log x$.

The conjecture that $p\nmid B_{p-3}$ for infinitely many $p$ (which is equivalent \eqref{eq4} failing for infinitely many $p$) has a generalization due to Zhao (\cite{Zha11}, Conjecture 2.1). We state a form of this conjecture here.
\begin{conjecture}
\label{conbnd}
Let $n$ be a positive integer, and suppose ${h\in\Q[x_3,x_5,\ldots,x_{2n+1}]}$ is non-zero and homogeneous (where $\deg(x_{2k+1})=2k+1$). Then there exist infinitely many primes $p$ such that $p$ does not divide the numerator of
\[
h\big( B_{p-3},B_{p-5},\ldots,B_{p-2n-1}\big).
\]
\end{conjecture}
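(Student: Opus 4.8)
I should say at once that I do not expect to prove Conjecture \ref{conbnd} outright: its case $n=1$ already asks for infinitely many primes $p$ with $p\nmid B_{p-3}$, i.e.\ infinitely many non-Wolstenholme primes, which is open. Nothing in the preceding sections bears on this, as that material concerns symmetric functions rather than Bernoulli numbers; Conjecture \ref{conbnd} is the genuinely arithmetic input and must be approached through the theory of Bernoulli numbers. The plan is therefore to record the conditional routes and the governing heuristic, and to locate the obstruction precisely.

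The cleanest implication is for $n=1$: a homogeneous $h\in\Q[x_3]$ is $h=c\,x_3^m$, so $p\nmid h(B_{p-3})$ is equivalent to $p\nmid B_{p-3}$ (for $p$ not dividing the denominator of $c$), and by Kummer's criterion every regular prime has this property; hence the $n=1$ case follows from the conjecture that there are infinitely many regular primes. For general $n$ the natural strategy is to show that the exceptional set is thin: this would follow from the expected bound $\#\{p\le x:h(B_{p-3},\ldots,B_{p-2n-1})\equiv 0\bmod p\}=o(\pi(x))$ for each fixed nonzero homogeneous $h$, which is a quantitative form of equidistribution of the residue tuples $\big(B_{p-3}\bmod p,\ldots,B_{p-2n-1}\bmod p\big)\in(\Z/p\Z)^n$, since a nonzero polynomial of bounded degree has only $O(p^{n-1})$ zeros in $(\Z/p\Z)^n$. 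I would state these two reductions explicitly.

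For an unconditional attack two avenues present themselves. The analytic one uses the Kummer congruences to identify each $B_{p-2k-1}\bmod p$ with a special value of a $p$-adic $L$-function $L_p(s,\omega^{j})$, rewrites $h(B_{p-3},\ldots,B_{p-2n-1})$ as a polynomial expression in such values, and seeks non-vanishing along a positive proportion of primes. The elementary one uses a Voronoi-type congruence expressing each $B_{p-2k-1}\bmod p$ through weighted power sums over $\{1,\ldots,p-1\}$ and tries to bound $\#\{p\le x:h\equiv 0\bmod p\}$ by estimating moments of these sums, the target being $o(\pi(x))$. The hard part --- the reason this is a conjecture and not a theorem --- is that at present no method produces infinitely many primes avoiding even the single congruence $p\mid B_{p-3}$, so a fortiori none for a polynomial combination of several Bernoulli residues; the equidistribution that both avenues ultimately require is itself wide open. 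I would close with the standard heuristic that models the residues $B_{p-2k-1}\bmod p$ as independent uniform elements of $\Z/p\Z$, under which the exceptional set $\{p:h(B_{p-3},\ldots,B_{p-2n-1})\equiv 0\}$ has density $0$, matching for $n=1$ the prediction that non-Wolstenholme primes have density $1$.
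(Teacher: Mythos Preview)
You have correctly identified the essential point: the statement is labeled a \emph{Conjecture} in the paper, and the paper offers no proof of it whatsoever. It is stated as an arithmetic hypothesis (attributed to Zhao) that is then \emph{assumed} in Theorems~\ref{thcon}, \ref{thker}, and \ref{thext}. There is therefore no ``paper's own proof'' to compare your proposal against, and your opening disclaimer is exactly right: even the case $n=1$ is open, being equivalent to the existence of infinitely many non-Wolstenholme primes.

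Your write-up is a reasonable survey of heuristics and conditional reductions, but it is not a proof and does not purport to be one. In the context of the paper, nothing further is expected: the conjecture functions purely as an input hypothesis, and the paper's contribution is to show what follows from it for the kernels $\ker\tilde{\varphi}^{[n]}$ and $\ker\hat{\varphi}$. If this proposal was submitted as a ``proof'' of the statement, the correct verdict is simply that no proof is possible with current technology, and the paper agrees.
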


\section{A conditional converse to Theorem \ref{thunc}}
The truth of Conjecture \ref{conbnd} would allow us to make Theorem \ref{thunc} biconditional:
\begin{theorem}
\label{thcon}
Assume the truth of Conjecture \ref{conbnd}. Suppose $f\in\L$ and $n$ is a positive integer, and let $I\ubn$ be given by Definition \ref{defbeta}. Then the congruence
\[
f\wef\equiv0\mod p^n
\]
holds for all $p$ sufficiently large if and only if $\overline{f}\in I\ubn$, where $\overline{f}$ is the reduction of $f$ modulo $\Is_n$.
\end{theorem}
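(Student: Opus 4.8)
The plan is to establish the reverse implication — that \eqref{eqthm} holding for all large $p$ forces $\overline{f}\in I\ubn$ — since Theorem \ref{thunc} already gives the forward direction. Working in the quotient $\L/\Is_n$, which has a finite $\Q$-basis given by monomials in the $e_k$ of \d{} less than $n$, I would first understand the structure of the quotient $\bigl(\L/\Is_n\bigr)/I\ubn$. The relations $\beta_k\ubn$ express each $e_k$ (for $k$ in a suitable range) in terms of higher $e_j$, so modulo $I\ubn$ one should be able to eliminate the even-indexed elementary symmetric functions $e_2,e_4,\ldots$ in favor of the odd-indexed ones $e_1,e_3,\ldots$ — indeed $\beta_k\ubn$ with $k$ even and $k\leq n-2$ vanishes, which is exactly the statement that $e_k$ is congruent modulo $I\ubn$ to a polynomial in the $e_j$ with $j>k$. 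Carrying this elimination out, I expect $\bigl(\L/\Is_n\bigr)/I\ubn$ to be spanned by monomials in the $e_{2i+1}$ (for $2i+1<n$) of total \d{} less than $n$, and the main combinatorial task is to check these are linearly independent over $\Q$, so that this ring has an explicit basis indexed by such monomials.

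Next I would translate the congruence hypothesis into a statement about Bernoulli numbers. Given $f$ with $\overline f\notin I\ubn$, write $\overline f$ modulo $I\ubn$ as a nonzero $\Q$-linear combination of the basis monomials in the odd $e_{2i+1}$. Using the refined form of \eqref{eqz} — namely that $\vp_p(e_{2k-1})$ is, modulo the appropriate power of $p$, an explicit rational multiple of $p^{2k}B_{p-2k-1}$ — together with multiplicativity of $\vp_p$, I would compute $\vp_p(f) \bmod p^n$ and identify it (after dividing out the relevant power of $p$) with the value at $(B_{p-3},B_{p-5},\ldots)$ of a nonzero homogeneous polynomial $h\in\Q[x_3,x_5,\ldots,x_{2n+1}]$ built from the basis expansion of $\overline f$. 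The key point is that distinct basis monomials $\prod e_{2i_j+1}$ contribute terms of distinct $p$-adic valuation ranges in a graded way, so no cancellation destroys the leading homogeneous piece; the nonvanishing of $\overline f$ modulo $I\ubn$ translates into nonvanishing of $h$.

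Once this dictionary is in place the conclusion is immediate: Conjecture \ref{conbnd} provides infinitely many primes $p$ with $p\nmid h(B_{p-3},\ldots,B_{p-2n-1})$, and for each such $p$ we get $\vp_p(f)\not\equiv 0\bmod p^n$, contradicting the hypothesis that \eqref{eqthm} holds for all large $p$. Hence $\overline f\in I\ubn$.

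I expect the main obstacle to be the bookkeeping in the second step — precisely matching $\vp_p(f)\bmod p^n$ to the value of a single nonzero homogeneous polynomial in the Bernoulli numbers. One has to be careful that (i) the error terms in \eqref{eqz} (the contributions modulo higher powers of $p$) do not interfere, which requires tracking \d{}s against $p$-adic valuations, and (ii) the homogeneity grading used in Conjecture \ref{conbnd} (with $\deg x_{2k+1}=2k+1$) lines up with the grading by \d{} on $\L$ — each factor $e_{2k-1}$ has \d{} $2k$ and contributes $B_{p-2k-1}$, so the weight matching needs the substitution $x_{2k+1}\leftrightarrow B_{p-2k-1}$ and a shift by the \d{} of $\overline f$. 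Establishing the linear independence of the odd-monomial basis of $\bigl(\L/\Is_n\bigr)/I\ubn$ in the first step is also something to verify carefully, though I expect it to follow from a dimension count once the elimination of the even $e_k$'s is made precise.
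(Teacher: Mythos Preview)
Your overall strategy matches the paper's --- reduce modulo $I\ubn$ to a polynomial in a distinguished subset of the $e_k$, then invoke Conjecture \ref{conbnd} via \eqref{eqz} --- but you have the parity reversed. In $\beta_k\ubn = e_k + \sum_{j\geq k}(-1)^{j+1}\binom{j}{k}e_j$ the coefficient of $e_k$ is $1+(-1)^{k+1}$, which equals $2$ for $k$ odd and $0$ for $k$ even. So $\beta_{2k+1}\ubn$ lets you express $e_{2k+1}$ in terms of higher $e_j$, while $\beta_{2k}\ubn$ contains no $e_{2k}$ term at all and cannot be used to eliminate $e_{2k}$; your sentence ``$\beta_k\ubn$ with $k$ even \ldots\ vanishes, which is exactly the statement that $e_k$ is congruent modulo $I\ubn$ to a polynomial in the $e_j$ with $j>k$'' is therefore incorrect. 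The paper accordingly eliminates the \emph{odd} $e_k$ and reduces $f$ modulo $\Is_n+I\ubn$ to some $g\in\Q[e_2,e_4,\ldots]$ of \d{} less than $n$; if $g\neq 0$, its lowest-\d{} piece $\tilde g=w(e_2,e_4,\ldots)$ yields the nonzero homogeneous polynomial $h(x_3,x_5,\ldots)=w(-x_3/3,-x_5/5,\ldots)$, contradicting Conjecture \ref{conbnd}.

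The parity choice is not cosmetic: it is what makes the gradings line up. For even indices, $e_{2k}$ has \d{} $2k+1$ and \eqref{eqz} gives $\vp_p(e_{2k})\equiv -\tfrac{1}{2k+1}p^{2k+1}B_{p-2k-1}\bmod p^{2k+2}$, so a monomial in even $e$'s of \d{} $d$ contributes exactly at order $p^d$, and the lowest-\d{} piece of $g$ becomes a homogeneous $h$ of degree $d$. With odd $e$'s this fails: $e_{2k-1}$ has \d{} $2k$ but $\vp_p(e_{2k-1})\equiv -\tfrac{k}{2k+1}p^{2k+1}B_{p-2k-1}\bmod p^{2k+2}$ (your exponent $p^{2k}$ is off by one), so a product of $r$ odd $e$'s of total \d{} $d$ contributes at order $p^{d+r}$. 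Monomials of the same \d{} then land at different $p$-adic orders (e.g.\ $e_3$ and $e_1^2$ both have \d{} $4$ but contribute at $p^5$ and $p^6$), so the ``lowest-\d{}-piece'' extraction no longer isolates a single homogeneous $h$. Switching to the even $e$'s fixes both the elimination step and the grading alignment at once.
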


\begin{proof}
This is the statement that $I\ubn=\ks\tvn$. Theorem \ref{thunc} implies $I\ubn\subset\ks\tvn$, so we need to show $\ks\tvn\subset I\ubn$.

Take $f\in\ks\tvn$. We have
\[
\beta_{2k+1}=2e_{2k+1}+\sum_{j\geq 2k+2}(-1)^{j+1}{j\choose 2k+1}e_j,
\]
so we can find an element $g\in\Q[e_2,e_4,\ldots]\subset\L$ of \d{} less than $n$ such that $g\equiv f\mod\Is_n+ I\ubn$. If $g$ is non-zero, take $\tilde{g}$ to be the homogeneous piece of $g$ of lowest \d, and write
\[
\tilde{g}=w(e_2,e_4,e_6,\ldots).
\]
Using \eqref{eqz} we would then get a contradiction to Conjecture \ref{conbnd} by taking
\[
h(x_3,x_5,\ldots)=w\lp\frac{-x_3}{3},\frac{-x_5}{5},\ldots\rp.
\]
We conclude that $g=0$, completing the proof.
\end{proof}


\section{An analogue of the zeta function}
\label{sec6}
In this section we introduce an analogue of the Riemann zeta function. Many relations among values of this function follow from our congruences for symmetric functions.
\subsection{Completed ring of symmetric functions}
To study congruences modulo all powers of $p$ at once, it is useful to use the completion $\A$ of $\L$ with respect to the grading by \d: it is the projective limit
\[
\A:=\varprojlim_n\L/\Is_n.
\]
It is often convenient to view an element of $\A$ as a formal infinite sum
\begin{equation}
\label{eqdeff}
f=\sum_{n=0}^\infty f_n,
\end{equation}
with $f_n\in\L$ homogeneous of \d{} $n$. We define $\hat{\Is}_k\subset\A$ to be the ideal consisting of those $f$ for which $f_n=0$ for $n<k$. We identify $\L/\Is_n$ with $\A/\hat{\Is}_n$.

\subsection{Zeta function}
The maps $\tvn:\L/\Is_n\to\cA_n$ defined in Sec.\ \ref{secalg} are compatible with the respective quotient maps, so we get a map
\[
\z:\A\to\hA,\gap \text{ where }\hA:=\varprojlim_n \cA_n.
\]
We put the discrete topology on each $\cA_n$ and $\L/\Is_n$ and the projective limit topology on $\hA$ and $\A$, so that $\z$ is continuous. We view $\z$ as an analogue of the Riemann zeta function.

The kernel of $\z$ is a closed ideal of $\A$. The element $f\in\A$ given by \eqref{eqdeff} is in the kernel of $\z$ if and only if for all $N\geq0$ the congruence
\[
\sum_{n<N}f_n\wef\equiv 0\mod p^N
\]
holds for $p$ sufficiently large.

Proposition \ref{propbeta} implies that the elements
\[
\beta_k:=e_k+\sum_{j=k}^{\infty} (-1)^{j+1}{j\choose k}e_j\in\A
\]
are in the kernel of $\z$. If we assume Conjecture \ref{conbnd}, we can show that the $\beta_k$ actually topologically generate $\ks\z$.
\begin{theorem}
\label{thker}
Assume the truth of Conjecture \ref{conbnd}. Then the kernel of $\hat{\vp}$ is equal to the closure of the ideal
\[
\Ib:=(\beta_0,\beta_1,\ldots)\subset\A.
\]
\end{theorem}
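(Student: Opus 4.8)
The plan is to bootstrap the finite-level statement (Theorem~\ref{thcon}) to the completed setting by a limiting argument. First I would record the easy inclusion: since each $\beta_k$ lies in $\ks\z$ by Proposition~\ref{propbeta}, and $\ks\z$ is a closed ideal, the closure $\overline{\Ib}$ is contained in $\ks\z$. The work is the reverse inclusion. Let $\pi_n:\A\to\A/\hat{\Is}_n=\L/\Is_n$ be the canonical projection, and note that under this identification $\pi_n(\beta_k)=\beta_k\ubn$, so $\pi_n(\Ib)=I\ubn$. The key compatibility is that $\z$ factors the maps $\tvn$ through the projections, i.e.\ $\tvn\circ\pi_n$ agrees with the composite of $\z$ with the projection $\hA\to\cA_n$.

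Now take $f\in\ks\z$. For each $n$, the congruence $\sum_{m<n}f_m\wef\equiv 0\bmod p^n$ holds for $p$ large, which says exactly that $\pi_n(f)\in\ks\tvn$. By Theorem~\ref{thcon} (here is where Conjecture~\ref{conbnd} enters, via that theorem), $\ks\tvn=I\ubn=\pi_n(\Ib)$, so $\pi_n(f)\in\pi_n(\Ib)$ for every $n$. It remains to promote this compatible family of ``$f$ mod $\hat\Is_n$ lies in the image of $\Ib$'' to the genuine statement $f\in\overline{\Ib}$. This is where I would argue: given $\varepsilon$-closeness in the projective-limit topology is governed by agreement modulo $\hat\Is_n$, it suffices to show that for every $n$ there exists $g\in\Ib$ with $f\equiv g\bmod\hat\Is_n$ --- which is precisely $\pi_n(f)\in\pi_n(\Ib)$, already established. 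Hence $f$ lies in the closure of $\Ib$, giving $\ks\z\subset\overline{\Ib}$.

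The one point that needs genuine care --- and which I expect to be the main obstacle --- is the last step: concluding $f\in\overline{\Ib}$ from $\pi_n(f)\in\pi_n(\Ib)$ for all $n$. This is not automatic for an arbitrary ideal in a projective limit; one needs that a sequence $g_n\in\Ib$ with $\pi_n(g_n)=\pi_n(f)$ can be chosen, and then that $g_n\to f$. The first part is immediate from $\pi_n(f)\in\pi_n(\Ib)$; the convergence $g_n\to f$ follows because $g_n-f\in\hat\Is_n$ and the ideals $\hat\Is_n$ form a neighborhood basis of $0$ defining the topology on $\A$. So the argument goes through cleanly provided one is careful that the topology on $\A$ is exactly the $\hat\Is_\bullet$-adic one, which is how $\A$ was defined. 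I would also double-check that $\ks\z$ being closed is used only for the forward inclusion and is not needed here. No further input beyond Theorem~\ref{thcon} is required.
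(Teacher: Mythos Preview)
Your proof is correct and follows essentially the same route as the paper: both argue the easy inclusion via closedness of $\ks\z$, then for the reverse inclusion reduce $f\in\ks\z$ modulo $\hat{\Is}_n$, invoke Theorem~\ref{thcon} to land in $I\ubn=\pi_n(\Ib)$, and conclude $f\in\Ib+\hat{\Is}_n$ for all $n$, hence $f\in\overline{\Ib}$ since the $\hat{\Is}_n$ are a neighborhood basis of $0$. Your explicit choice of $g_n\in\Ib$ with $g_n\to f$ is just a rephrasing of the paper's observation that $f\in\Ib+\hat{\Is}_n$; the potential subtlety you flag is handled identically in both arguments.
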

\begin{proof}
The inclusion $\Ibo\subset\ks\z$ follows because $\ks\z$ is closed and $\beta_n\in\ks\z$. For the reverse inclusion, suppose $f\in\ks\z$. This means that for each $n$ the reduction of $f$ modulo $\hat{\Is}_n$ is in $\ks\tvn$. Theorem \ref{thcon} then implies that the reduction of $f$ is in $I\ubn$. The preimage of $I\ubn$ under the quotient map $\A\to\A/\hat{\Is}_n$ is $\Ib+\hat{\Is}_n$, so we have $f\in\Ib+\hat{\Is}_n$ for all $n$. The ideals $\hat{\Is}_n$ form a neighborhood basis of 0 in $\A$, so it follows that $f\in\Ibo$.
\end{proof}

\begin{remark}
The ring of symmetric functions has the structure of a Hopf algebra:\ there is a comultiplication map $\Delta:\L\to\L\otimes\L$ giving $\L$ the structure of a cogroup object in the category of $\Q$-algebras. This induces a cogroup object structure on the topological $\Q$-algebra $\A$ (the induced comultiplication $\Delta:\A\to\A\hat{\otimes}\A$ involves the completed tensor product). It can be shown that the closed ideal $\Ibo$ is a Hopf ideal, i.e., the Hopf algebra structure descends to the quotient $\A/\Ibo$. The proof of this fact will be given in a later work.
\end{remark}

We also show that Conjecture \ref{conbnd} implies that every congruence \eqref{eqf} arises from an element of $\ks\z$ in the following sense:

\begin{theorem}
\label{thext}
Assume the truth of Conjecture \ref{conbnd}. Let $n$ be a positive integer, and suppose $f\in\L$ satisfies \eqref{eqf} and that $f$ has \d{} less than $n$. Then there exists
\[
g=\sum_{k\geq 0}g_k\in\ks\z,
\]
with $g_k$ homogeneous of \d{} $k$, such that
\[
f=\sum_{k<n}g_k.
\]
\end{theorem}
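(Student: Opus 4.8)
The plan is to leverage Theorem \ref{thcon}, which already characterizes the $f$ of \d{} less than $n$ satisfying \eqref{eqf} as precisely those whose reduction $\overline{f}$ modulo $\Is_n$ lies in $I\ubn$. Since $I\ubn$ is generated by the classes $\beta_k\ubn$, we can write
\[
\overline{f}=\sum_{k}a_k\,\beta_k\ubn\quad\text{in }\L/\Is_n
\]
for suitable $a_k\in\L/\Is_n$; this sum is finite because $\beta_k\ubn=0$ once $k\geq n-1$. Lift each $a_k$ to an element $\tilde{a}_k\in\A$ (for instance, pick a representative in $\L$ of \d{} less than $n$ and regard it as an element of $\A$), and form
\[
g':=\sum_{k}\tilde{a}_k\,\beta_k\in\Ib\subset\ks\z.
\]
By construction the reduction of $g'$ modulo $\hat{\Is}_n$ equals $\overline{f}$, so $g'-f\in\hat{\Is}_n$, i.e.\ the \d{}-$<n$ part of $g'$ agrees with $f$. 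Writing $g'=\sum_k g'_k$ with $g'_k$ homogeneous of \d{} $k$, we then have $f=\sum_{k<n}g'_k$, which is exactly the desired conclusion with $g=g'$.

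The only subtlety is bookkeeping: one must check that $g'$, being an $\A$-linear combination of the $\beta_k$, genuinely lies in $\ks\z$ and not merely in the (non-closed) ideal $\Ib$. But $\ks\z$ is a closed \emph{ideal} containing every $\beta_k$, so it contains the ideal $\Ib$ outright, and the issue does not arise — no closure argument is even needed here, since $g'$ is a finite $\A$-linear combination. A second routine point is verifying that the lift of the relation $\overline{f}=\sum_k a_k\beta_k\ubn$ from $\L/\Is_n=\A/\hat{\Is}_n$ up to $\A$ is faithful modulo $\hat{\Is}_n$; this is immediate from the identification $\L/\Is_n\cong\A/\hat{\Is}_n$ and the fact that each $\beta_k$ reduces to $\beta_k\ubn$ and each $\tilde a_k$ reduces to $a_k$ under $\A\to\A/\hat{\Is}_n$.

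I expect the main (and essentially the only) obstacle to be purely expository: making precise the passage from the finitely-many-generators description of $I\ubn$ inside $\L/\Is_n$ to an honest element of $\A$, keeping track of which sums are finite and why. The mathematical content is entirely supplied by Theorem \ref{thcon} (hence by Conjecture \ref{conbnd}); once that characterization is in hand, the argument is a one-line lifting of a relation through the quotient map $\A\to\A/\hat{\Is}_n$. So the proof will be short, with the emphasis on stating the lift cleanly and noting the finiteness of the defining sum for $g$.
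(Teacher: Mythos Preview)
Your proposal is correct and follows essentially the same route as the paper: invoke Theorem \ref{thcon} to write $\overline{f}$ as a finite $\L/\Is_n$-combination of the $\beta_k\ubn$, lift the coefficients, and take $g$ to be the corresponding $\A$-combination of the $\beta_k$. The only cosmetic difference is that the paper cites Theorem \ref{thker} for $g\in\ks\z$, whereas you (more economically) note that $\Ib\subset\ks\z$ directly because each $\beta_k\in\ks\z$.
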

This theorem says that every congruence \eqref{eqf} arises from an element of $\ks\z$. Note that the converse of this result is true unconditionally (i.e., the existence of such a $g$ implies that $f$ satisfies \eqref{eqf}). In \cite{Ros13} (Conjecture 1), we conjecture that an analogous result should hold for quasi-symmetric functions.

\begin{proof}
By Theorem \ref{thcon}, the reduction of $f$ modulo $\Is_n$ is in $I\ubn$. This means that we can find elements $r_0,\ldots,r_k\in\L/\Is_n$ such that
\[
f\equiv r_0\beta_0\ubn+\ldots+r_k\beta_k\ubn\mod \Is_n.
\]
Let $\tilde{r}_0,\ldots,\tilde{r}_k\in\L$ be lifts of $r_0,\ldots,r_k$ (we may choose $\tilde{r}_0,\ldots,\tilde{r}_k\in\L$ to be the unique lifts of \d{} less than $n$), and set
\[
g=\tilde{r}_0\beta_0+\ldots+\tilde{r}_k\beta_k\in\A.
\]
Theorem \ref{thker} implies that $g\in\ks\z$, and by construction $g\equiv f\mod\Is_n$. Finally the hypothesis that $f$ has \d{} less than $n$ implies the desired result.
\end{proof}

%
%

\subsection{Quasi-symmetric functions and multiple zeta values}
The \emph{ring of quasi-symmetric functions over $\Q$}, denoted $\qs$, is a $\Q$-algebra containing $\L$. In a recent work \cite{Ros13} the author defined a continuous ring homomorphism
\[
\hat{\zeta}:\qsh\to\hA
\]
which is an analogue of the multiple zeta function (here $\qsh$ is the completion of $\qs$). The map $\z$ defined in Sec.\ \ref{sec6} section is the restriction of $\hat{\zeta}$ to $\A\subset\qsh$.

\ack
I thank Jeff Lagarias for many helpful discussion, references, suggestions, and support. This work supported in part by NSF grants  DMS-0943832 and DMS-1101373.

\bibliographystyle{hplain}
\bibliography{jrbiblio}

\end{document}